\newtheorem{lemma}{Lemma}[section]
\newtheorem{theorem}[lemma]{Theorem}
\newtheorem{proposition}[lemma]{Proposition}
\newtheorem{corollary}[lemma]{Corollary}
\theoremstyle{definition}
\newtheorem*{remark}{Remark}
\numberwithin{equation}{section}
\newcommand{\comment}[1]{}
\newcommand{\R}{{\mathbb R}}
\newcommand{\C}{{\mathbb C}}
\newcommand{\N}{{\mathbb N}}
\newcommand{\Deg}{{\mathrm {Deg}}}
\newcommand{\Hm}[1]{\leavevmode{\marginpar{\tiny%
$\hbox to 0mm{\hspace*{-0.5mm}$\leftarrow$\hss}%
\vcenter{\vrule depth 0.1mm height 0.1mm width \the\marginparwidth}%
\hbox to 0mm{\hss$\rightarrow$\hspace*{-0.5mm}}$\\\relax\raggedright
#1}}}
\begin{document}

\title[Short time behavior of selfadjoint semigroups]{Note on short time behavior of semigroups associated to selfadjoint operators}

\author[Keller]{Matthias Keller}
\address{M. Keller, Mathematisches Institut \\Friedrich Schiller Universit{\"a}t Jena \\07743 Jena, Germany }\email{m.keller@uni-jena.de}

\author[Lenz]{Daniel Lenz}
\address{D. Lenz, Mathematisches Institut \\Friedrich Schiller Universit{\"a}t Jena \\07743 Jena, Germany } \email{daniel.lenz@uni-jena.de}

\author[M\"unch]{Florentin M\"unch}
\address{F. M\"unch, Mathematisches Institut \\Friedrich Schiller Universit{\"a}t Jena \\07743 Jena, Germany }
\email{Florentin.Muench@uni-jena.de}

\author[Schmidt]{Marcel Schmidt}
\address{M. Schmidt, Mathematisches Institut \\Friedrich Schiller Universit{\"a}t Jena \\07743 Jena, Germany } \email{schmidt.marcel@uni-jena.de}

\author[Telcs]{Andras Telcs}
\address{A. Telcs, Department of Computer Science and Information Theory, Budapest University of Technology and Economics,
Magyar tudosok korutja 2, H-1117, Budapest, Hungary}
\email{telcs@szit.bme.hu}

\begin{abstract} We present a simple observation showing
that the heat kernel on a locally finite  graph behaves for short
times $t$ roughly like $t^d$,  where $d$ is the combinatorial
distance. This is very different from the classical Varadhan type
behavior on manifolds. Moreover, this also gives that  short time
behavior and  global behavior of the heat kernel are governed by two
different metrics whenever the degree of the graph is not uniformly
bounded.
\end{abstract}


\maketitle



\section*{Introduction}
There is a strong connection between  basic geometric features  of a
Riemannian manifold and the behavior of the heat kernel of the
diffusion  associated to its   Laplace Beltrami operator. In
particular, global features of the heat diffusion  like e.g.
stochastic completeness,  recurrence, Liouville properties and
bounds on the infimum of the (essential) spectrum  are captured by
asymptotic behavior of the volume growth of balls
\cite{Broo,KarpLi,Gri}. Likewise, short time behavior of the heat is
related to distance of points. Indeed, there are famous results
giving \textit{Varadhan type behavior}, i.e.,
$$ \lim_{t\to 0+} t \log p_t (x,y) = - \frac{\varrho^2 (x,y)}{2}$$
with $\varrho$ being the geodesic distance between and $p$ denoting
the heat kernel \cite{Var,Var2}.

Starting with the seminal work of Sturm \cite{Stu}  it has became
clear that similar results hold in more generality; namely,  in the
context of strongly local Dirichlet forms. There geometry is
captured by the so-called intrinsic metric. In particular, there are
volume criteria for stochastic completeness, recurrence, Liouville
properties as well as bounds on the infimum of the (essential)
spectrum \cite{Not,Stu}. Moreover, in a series of papers
\cite{AH,ERS,ERSZ1,ERSZ2,HR, Nor} Varadhan type  behavior could be shown for rather
general strongly local Dirichlet forms. See also \cite{GT,T} for related results on metric measure spaces and fractal spaces.

Let us emphasize that  in both the volume criteria capturing the
global situation and the short time Varadhan behavior  the geometry
enters via the same quantity viz the intrinsic metric.

Recently, a concept of intrinsic metric for general regular
Dirichlet forms has been introduced in \cite{FLW} (see
\cite{Fol1,GHM,Uem} for related material as well). This can in
particular be applied to graphs, i.e., Dirichlet forms on discrete
spaces. In this situation  a particular nice example of such a
metric was then provided by Huang in \cite{Hua}. It turns out that
the combinatorial metric on a graph is an intrinsic metric if and
only if the degree is uniformly bounded \cite{HKMW,KLWS}. Thus, for
graphs with unbounded degree intrinsic metrics are rather different
from combinatorial metrics.

For graphs  intrinsic metrics  have proven extremely useful in
capturing global features, see e.g.  \cite{FLW,Fol1,GHM,HuK,HKW}.
Thus, one might expect that they  also allow  one to recover a
Varadhan type result. The aim of this note is to show that this is
NOT  the case. Indeed, our main result shows that, for small $t>0$,
the heat kernel $p_t$ on a locally finite graph satisfies
$$(*)\;\:\; |p_t (x,y) - c(x,y) t^d|\leq C(x,y) t^{d+1}$$
and hence
$$(**)\;\:\;  \lim_{t\to 0+} \frac{ \log p_t (x,y)}{\log t}  = d (x,y),$$
where $d$ is the combinatorial graph distance and $c(x,y)$ and
$C(x,y)$ are suitable positive  constants. This short time behavior
clearly implies
$$\lim_{t\to 0+} t \log p_t (x,y) = 0$$
 and, hence,
failure of the Varadhan type behavior. Thus, the short time behavior
on graphs is rather different from the behavior on manifolds  in two
respects:

\begin{itemize}
\item  The actual scaling is rather  different.
\item  Global behavior and short time behavior are governed by two different metrics,
whenever the degree is not uniformly bounded.
\end{itemize}

The result itself is not at all  hard to prove but a rather
straightforward application of the Taylor theorem and we consider
the simple reasoning  rather a virtue of our approach. Despite its
simple proof   on the conceptual level the result is rather
remarkable. It shows a clear difference between local and non-local
Dirichlet forms:  In the strongly local case the whole geometry is
governed by one metric. In the non-local case different metrics play
a role (if the degree is not uniformly bounded).

\smallskip

The main thrust of our investigation and the preceding discussion
concerns the different short time  scaling of heat kernels  for
graphs compared to the strongly local case. However, it is also
worthwhile to compare our results to existing bounds on heat kernels
on graphs. Of course, there is no shortage of such bounds in the
literature. Among  those close to our work we mention contributions
by Davies \cite{Dav} and Pang \cite{Pan} as well as subsequent
investigations by Metzger / Stollmann \cite{MS} and  Schmidt
\cite{Schm}  (and refer the reader to the references of these works
for further literature). These works present  formidable bounds
holding for all times and positions. Still, for short times none of
these bounds seems explicit enough to provide estimates as given
above. In fact, \cite{Dav,Pan} only deal with upper bounds. The
paper \cite{MS} (see its extension \cite{Schm} as well) on the other
hand features a very direct and elegant approach improving on the
upper bounds of \cite{Dav,Pan} and at the same time gives a lower
bound. From these results a somewhat weaker version of $(**)$ can be
inferred, which still gives $(*)$. Note, however, that  \cite{MS}
requires very strong boundedness conditions on the graph viz uniform
boundedness of the degree and uniform bounds on the weights. In
contrast, our result does not require any boundedness assumptions
but rather works for all locally finite graphs.

In this context it may also be worth pointing out two further  nice
features resulting from the generality and simplicity of our
approach: Firstly,  our result is completely independent of the
underlying measure (speed measure) whereas all earlier results seem
to have dealt with a  normalized speed measure. Secondly, our
formalism hides  cumbersome details on the counting of paths present
in earlier approaches, while basically capturing  the fact that the
occurrence of $d$ jumps in a  short time has probability
proportional to  $t^d$.

\smallskip

Our result is actually a consequence of a more general  result
dealing with  semigroups associated to arbitrary non-negative
self-adjoint operators.  This result is discussed in
Section~\ref{Main-abstract-result}.
The application to graphs
is then given in Section~\ref{Application-to-graphs}.
Our results also give some information on manifolds. This is not of
relevance for the Laplace-Beltrami operator as there  much more is
known. However, it may be of interest as we can also deal with
different operators such as higher order operators. We discuss this
shortly in Section \ref{Remark-on-application-to-manifolds}.

Let us finally point out that both our abstract result and the
application to graphs do not only hold for the semigroup but also
for the unitary group.

\section{The main abstract result}\label{Main-abstract-result}
In this section, we present the main abstract ingredient of our
approach. This ingredient is a very simple observation which
essentially shows that Taylor expansion is possible even  for
unbounded selfadjoint operators (under suitable assumptions on the
involved  elements of the Hilbert space). This does not have to do
with Dirichlet forms at all. Accordingly, it gives results for both
semigroups and unitary groups.

\bigskip

We start by recalling some basic consequences of the spectral
theorem for selfadjoint operators in Hilbert space. Let $L$ be a
selfadjoint operator in the complex   Hilbert space $\mathcal{H}$.
By the spectral theorem, we then obtain for any $h\in \mathcal{H}$ a
unique (positive)  measure $\varrho_h$ on $\R$  with
$$\langle h, (L - z)^{-1} h\rangle = \int_\R \frac{1}{t - z}
d\varrho_h (t)$$ for all $z\in \C\setminus \R$. The total mass
$\varrho_h (\R) $ is bounded by $\|h\|^2$. The measure $\varrho_h$
is known as \textit{spectral measure} (of $h$ with respect to $L$).
Moreover, for any measurable $\Psi : \R\longrightarrow \C$ there
exists a unique normal  operator $\Psi (L)$ on $\mathcal{H}$ with
domain $D (\Psi (L))$  satisfying
$$ D (\Psi (L)) =  \{h\in \mathcal{H} : \Psi \in L^2
(\R, \varrho_h)\}  \; \: \mbox{and} \; \:  \langle h,\Psi (L)
h\rangle = \int \Psi (s) d\varrho_h (s)$$ for all $h\in D(\Psi
(L))$. If $\Psi : \R\longrightarrow \R, x\mapsto x^n$, is the
$n$-the power, then we write $L^n$ instead of $\Psi (L)$.

By polarization, we can also  assign to $f,g\in \mathcal{H}$ the
signed measure $\varrho_{f,g}$ on $\R$ defined via
$$\varrho_{f,g}:= \frac{1}{4} \sum_{k =0}^{3} i^k \varrho_{f+ i^k
g}.$$ Then, a direct  calculation shows
\begin{eqnarray*}\int \Psi d
\varrho_{f,g} & = & \frac{1}{4}\sum_{k =0}^{3} i^k \int \Psi
d \varrho_{f+ i^k g}\\
 & =&\frac{1}{4}\sum_{k =0}^{3} i^k \langle (f + i^k g), \Psi (L)( f +
 i^k g)\rangle\\
  &= &  \langle f, \Psi (L) g\rangle
\end{eqnarray*} for all
measurable $\Psi : \R \longrightarrow \C$ with $f,g\in D(\Psi (L))$.
From this a rather straightforward  estimate gives
\begin{eqnarray*}
\left| \int \Psi (s) d\varrho_{f,g} (s)\right| & = &  \left|\langle
f, \Psi (L) g\rangle \right|\\
&\leq & \langle f, |\Psi| (L) f\rangle ^{1/2} \langle g, |\Psi| (L)g
\rangle^{1/2}\\
&\leq & \frac{1}{2} \left(\langle f, |\Psi| (L) f\rangle +\langle g,
|\Psi| (L)g \rangle \right).
\end{eqnarray*}
Similar statements hold in real Hilbert space. We refrain from
giving  details.

\smallskip

After these preparations we can now come to our  main technical
result. This result is just a combination of the Taylor theorem and
the spectral theorem.

\begin{proposition}[Taylor expansion and spectral calculus] \label{prop-taylor} Let $L$ be a selfadjoint
operator in the complex  Hilbert space $\mathcal{H}$. Let $N$ be a
natural number and $f,g\in D (L^{N+1})$ be given. Then, the estimate
$$\left| \langle f, \varPhi (L) g\rangle - \sum_{n=0}^N
\frac{\varPhi^{(n)} (0)}{n!} \langle f, L^n g\rangle \right| \leq
\frac{\|\varPhi^{(N+1)}\|_\infty}{(N+1)!} \frac{ \langle f,
|L|^{N+1} f\rangle + \langle g, |L|^{N+1} g\rangle }{2} $$ holds for
any uniformly  bounded $\varPhi : \R\longrightarrow \C$, which is
$(N+1)$-times continuously differentiable with uniformly bounded
$(N+1)$-derivative.
\end{proposition}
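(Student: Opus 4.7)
The plan is to combine Taylor's theorem applied to $\varPhi$ with the two identities developed just before the statement, namely $\langle f, \Psi(L) g\rangle = \int \Psi\, d\varrho_{f,g}$ and the polarization-type estimate $\left|\int \Psi\, d\varrho_{f,g}\right| \leq \tfrac{1}{2}\bigl(\langle f, |\Psi|(L) f\rangle + \langle g, |\Psi|(L) g\rangle\bigr)$. Since every quantity appearing on both sides of the desired inequality is a spectral integral of some function against either $\varrho_{f,g}$, $\varrho_f$, or $\varrho_g$, the proposition should reduce to a purely pointwise estimate on $\R$ handed to the spectral calculus.

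Concretely, the first step is to write, for each $s \in \R$, the Taylor expansion
$$\varPhi(s) = \sum_{n=0}^{N} \frac{\varPhi^{(n)}(0)}{n!} s^n + R_N(s),$$
with remainder satisfying $|R_N(s)| \leq \frac{\|\varPhi^{(N+1)}\|_\infty}{(N+1)!} |s|^{N+1}$. Integrating both sides against $\varrho_{f,g}$ and using the spectral identity $\int s^n\, d\varrho_{f,g} = \langle f, L^n g\rangle$ for the polynomial part, one obtains
$$\langle f, \varPhi(L) g\rangle - \sum_{n=0}^{N} \frac{\varPhi^{(n)}(0)}{n!} \langle f, L^n g\rangle = \int R_N\, d\varrho_{f,g}.$$
Applying the pre-established Cauchy--Schwarz/polarization bound with $\Psi = R_N$ and then inserting the pointwise bound $|R_N(s)| \leq \frac{\|\varPhi^{(N+1)}\|_\infty}{(N+1)!} |s|^{N+1}$ inside $\langle f, |R_N|(L) f\rangle$ and $\langle g, |R_N|(L) g\rangle$ (monotonicity of the spectral integral in the integrand) yields exactly the claimed right-hand side.

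The one thing to verify carefully is that every object above is well-defined. The hypothesis $f,g \in D(L^{N+1})$ means that $s\mapsto s^{N+1}$ lies in $L^2(\R,\varrho_f) \cap L^2(\R,\varrho_g)$, hence the same is true for $s\mapsto |s|^{N+1}$, ensuring finiteness of $\langle f, |L|^{N+1} f\rangle$ and $\langle g, |L|^{N+1} g\rangle$ as well as membership of $f,g$ in $D(L^n)$ for $0\leq n\leq N+1$, so each term $\langle f, L^n g\rangle = \int s^n\, d\varrho_{f,g}$ is meaningful. Boundedness of $\varPhi$ puts $f,g$ in $D(\varPhi(L))$, and the pointwise bound on $R_N$ by $C|s|^{N+1}$ then puts $R_N$ in $L^2(\varrho_f) \cap L^2(\varrho_g)$ as well, so all spectral integrals converge. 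With these domain checks in place the argument is just a transparent transfer of the scalar Taylor inequality to the operator setting; the main (and only minor) obstacle is precisely this domain bookkeeping, since the analytic content has already been absorbed into the scalar Taylor theorem and into the earlier spectral lemma.
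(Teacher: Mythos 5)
Your proposal is correct and follows essentially the same route as the paper's own proof: Taylor expansion of $\varPhi$ at $0$ with a Lagrange-type remainder bounded pointwise by $\frac{\|\varPhi^{(N+1)}\|_\infty}{(N+1)!}|s|^{N+1}$, integration against $\varrho_{f,g}$ using $\int s^n\,d\varrho_{f,g}=\langle f,L^n g\rangle$, and the previously established polarization/Cauchy--Schwarz estimate to control the remainder integral. The extra domain bookkeeping you carry out is consistent with (and slightly more explicit than) what the paper does.
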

\begin{proof}
By the assumption on $f,g$ we  obtain in particular
$$ \int s^n d\varrho_{f,g} (s) = \langle f, L^n g\rangle
$$ for all $n=0,1\ldots, N+1$.
Considering the  Taylor expansion of $\varPhi$ around $0$  we infer
$$\varPhi (s) = \sum_{n=0}^N
\frac{\varPhi^{(n)} (0)}{n!} s^n +  \frac{s^{N+1}}{(N+1)!} R_{N+1}
(s)$$ with
$$|R_{N+1} (s)| \leq \|\varPhi^{(N+1)}\|_\infty.$$
Invoking the considerations on the spectral theorem preceding this
proposition, we then  infer
\begin{eqnarray*} \langle f, \varPhi (L)
g\rangle  &= & \int \varPhi (s) d\varrho_{f,g} (s)\\
&=& \sum_{n=0}^\N \frac{\varPhi^{(n)} (0)}{n!} \langle f, L^n
g\rangle + \frac{1}{(N+1)!} \int s^{N+1} R_{N+1} (s) d\varrho_{f,g}
(s).
\end{eqnarray*}
It remains to estimate the remainder term $$T:= \int s^{N+1} R_{N+1}
(s) d\varrho_{f,g} (s).$$  By  $$|s^{N+1} R_{N+1} (s)| \leq
|s|^{N+1} \| \varPhi^{(N+1)}\|_\infty$$  and the considerations on
the spectral theorem above, we clearly have
$$ |T| \leq \|\varPhi^{(N+1)}\|_\infty \frac{ \langle f,
|L|^{N+1} f\rangle + \langle g, |L|^{N+1} g\rangle }{2}$$ and the
proof of the proposition is finished.
\end{proof}

From the previous proposition we can rather directly obtain our main
abstract results on short time behavior of semigroups and unitary
groups associated to selfadjoint operators.

In order to phrase our results appropriately it is convenient to
introduce one  further piece of notation:  Whenever $L$ is  a
selfadjoint operator on the Hilbert space $\mathcal{H}$ and  $f,g\in
\mathcal{H}$ are given with $f,g\in D (L^n)$ for all natural numbers
$n$  we define
$$d_L (f,g):=\inf\{n : \langle f, L^n g\rangle \neq 0\},$$ where the
infimum over the empty set is taken to be $\infty$.

\begin{lemma}[Short time behavior -  semigroup] \label{lem:stbs} Let $L$ be a selfadjoint operator on the  Hilbert
space $\mathcal{H}$ with $L\geq 0$. Let $f,g\in \mathcal{H}$ with
$f,g\in D (L^n)$ for all natural numbers $n$ be given. Then, the
estimate
$$ \left| \langle f, e^{-t L} g\rangle - (-t)^n \cdot \frac{\langle f, L^n g\rangle }{n!}\right|
\leq t^{n+1} \cdot  \frac{\langle f, L^{n+1} f\rangle + \langle g,
L^{n+1} g\rangle }{2 \cdot (n+1)!} $$ holds  for all $n\leq d_L
(f,g)$ and all $t\geq 0$.
\end{lemma}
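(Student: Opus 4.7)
The plan is to invoke Proposition~\ref{prop-taylor} with the function $\varPhi(s):=e^{-ts}$, which satisfies $\varPhi(L)=e^{-tL}$ and $\varPhi^{(k)}(0)=(-t)^k$, and then to exploit the vanishing of the low-order moments $\langle f, L^k g\rangle$ forced by the hypothesis $n\leq d_L(f,g)$.

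The first step is to address the fact that $\varPhi(s)=e^{-ts}$ is not uniformly bounded on all of $\R$, so Proposition~\ref{prop-taylor} does not apply verbatim. However, since $L\geq 0$, for every $h\in\mathcal H$ the spectral measure $\varrho_h$, and hence the polarized measure $\varrho_{f,g}$, is supported on $[0,\infty)$. On this half-line one has $|\varPhi^{(k)}(s)|=t^k e^{-ts}\leq t^k$ for all $s\geq 0$ and $t\geq 0$. I would therefore either replace $\varPhi$ by a smooth bounded extension that agrees with $e^{-ts}$ on $[0,\infty)$ (which leaves all relevant spectral integrals unchanged), or rerun the short proof of Proposition~\ref{prop-taylor} verbatim with the Lagrange remainder evaluated on $[0,\infty)$ in place of the global sup $\|\varPhi^{(n+1)}\|_\infty$. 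Either way, applying the proposition with $N=n$ yields
\begin{equation*}
\left|\langle f, e^{-tL} g\rangle - \sum_{k=0}^{n}\frac{(-t)^k}{k!}\langle f, L^k g\rangle\right|
\;\leq\; \frac{t^{n+1}}{(n+1)!}\cdot \frac{\langle f, L^{n+1} f\rangle + \langle g, L^{n+1} g\rangle}{2},
\end{equation*}
using $|L|^{n+1}=L^{n+1}$ since $L\geq 0$.

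Second, by the very definition of $d_L(f,g)$ as an infimum, the assumption $n\leq d_L(f,g)$ forces $\langle f, L^k g\rangle=0$ for every $k<n$. Consequently the Taylor sum on the left collapses to its last term $\frac{(-t)^n}{n!}\langle f, L^n g\rangle$, and the claimed inequality drops out.

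The conceptual content of the argument is trivial: Taylor expansion plus moment vanishing. The only genuine obstacle is the bookkeeping around the fact that $\varPhi$ fails to be bounded on the negative half-line, which is mild thanks to the positivity $L\geq 0$ and can be handled in either of the two ways sketched above.
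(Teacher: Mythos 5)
Your proposal is correct and follows essentially the same route as the paper: the paper likewise applies Proposition~\ref{prop-taylor} to (an extension of) the exponential, noting that the values on $(-\infty,0)$ are irrelevant because $L\geq 0$ and that $|L|=L$, and the vanishing of the moments $\langle f,L^kg\rangle$ for $k<n\leq d_L(f,g)$ collapses the Taylor sum. The only cosmetic difference is that the paper absorbs $t$ into the operator (replacing $L$ by $tL$ with $\varPhi(x)=e^{-x}$) whereas you absorb it into the function $\varPhi(s)=e^{-ts}$; both yield the identical bound.
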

\begin{remark} Let us note that the statement of the lemma covers
two cases  at once. If  $d_L (f,g) < \infty$ it suffices to consider
$n = d_L (f,g)$ and this statement then gives the statement for the
smaller values of $n$ as well. If $d_L (f,g) =\infty$ then each  $n$
gives valid information.
\end{remark}

\begin{proof} This follows  from the previous proposition
with $L$ replaced by $t L$ and
 $\varPhi$ a suitable  extension of $[0,\infty)\longrightarrow
[0,\infty),\:  x\mapsto e^{-x}$, to  $\R$. Note that the actual
values of the extension on $(-\infty, 0)$ are completely irrelevant
due to $L\geq 0$. Note also that $|L| = L\geq 0$.
\end{proof}

\begin{lemma}[Short time behavior - unitary group]\label{lem:stbu} Let $L$ be a selfadjoint operator on the complex  Hilbert
space $\mathcal{H}$. Let $f,g\in \mathcal{H}$ with $f,g\in D (L^n)$
for all natural numbers $n$ be given. Then, the estimate
$$ \left| \langle f, e^{-i t L} g\rangle - (-i t)^n  \cdot \frac{\langle f, L^n g\rangle}{n!}\right|
\leq t^{n+1} \cdot  \frac{\langle f, |L|^{n+1} f\rangle + \langle g,
|L|^{n+1} g\rangle }{2 \cdot (n+1)!} $$ holds  for all $n\leq d_L
(f,g)$ and all $t\geq 0$.
\end{lemma}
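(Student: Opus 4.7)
The plan is to mirror the proof of Lemma~\ref{lem:stbs} almost verbatim, the only adaptation being the choice of test function and the fact that $L$ is no longer assumed non-negative. Concretely, I would fix $t\geq 0$, apply Proposition~\ref{prop-taylor} with $L$ replaced by $tL$, and take $\varPhi\colon \R\to \C$, $\varPhi(x)=e^{-ix}$. Unlike the semigroup case, no extension is required here: $\varPhi$ is already defined on all of $\R$, is uniformly bounded by $1$, and is smooth with $\varPhi^{(k)}(x)=(-i)^k e^{-ix}$, so every derivative is bounded by $1$ as well. In particular $\varPhi^{(k)}(0)=(-i)^k$ and $\|\varPhi^{(n+1)}\|_\infty=1$.

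Plugging these data into Proposition~\ref{prop-taylor} with $N=n$ and using $\langle f,(tL)^k g\rangle=t^k\langle f,L^k g\rangle$ together with $|tL|^{n+1}=t^{n+1}|L|^{n+1}$ (which uses $t\geq 0$), I obtain
$$\left|\langle f, e^{-itL} g\rangle - \sum_{k=0}^{n}\frac{(-it)^k}{k!}\langle f, L^k g\rangle\right| \leq \frac{t^{n+1}}{(n+1)!}\cdot \frac{\langle f, |L|^{n+1}f\rangle + \langle g, |L|^{n+1}g\rangle}{2}.$$
The hypothesis $f,g\in D(L^k)$ for every natural $k$ is exactly what is needed so that the moments $\int s^k\,d\varrho_{f,g}(s)$ agree with $\langle f, L^k g\rangle$ up to order $n+1$, which is the input used in the proposition.

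The final step is to exploit the assumption $n\leq d_L(f,g)$: by definition of $d_L(f,g)$ one has $\langle f, L^k g\rangle=0$ for every $k<d_L(f,g)$, hence for every $k<n$. Thus all Taylor coefficients in the above sum vanish except possibly the top one, and the sum collapses to $(-it)^n \langle f, L^n g\rangle/n!$, which gives precisely the inequality in the statement.

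I do not expect any real obstacle; the argument is essentially a transcription of the semigroup proof. The only conceptual difference worth flagging is that, because $L$ need not be non-negative, the right-hand side must involve $|L|^{n+1}$ rather than $L^{n+1}$; this distinction is already built into Proposition~\ref{prop-taylor}, so it requires no additional work. The boundedness of all derivatives of $e^{-ix}$ means that, in contrast to the semigroup case, the estimate is genuinely symmetric in $t$ (it would extend verbatim to $t\in\R$ if one writes $|t|^{n+1}$ on the right).
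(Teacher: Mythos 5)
Your proposal is correct and follows exactly the paper's route: the paper proves this lemma by applying Proposition~\ref{prop-taylor} with $L$ replaced by $tL$ and $\varPhi(x)=e^{-ix}$, and then (implicitly) using $n\leq d_L(f,g)$ to collapse the Taylor sum to its top term. You have simply spelled out the details the paper leaves to the reader, including the correct observation that no extension of $\varPhi$ is needed here, in contrast to the semigroup case.
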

\begin{proof} This follows immediately from the previous proposition
with $L$ replaced by $t L$ and
 $\varPhi : \R \longrightarrow
\C,\:  x\mapsto e^{-i x}$.
\end{proof}

\begin{corollary}[Leading exponent] \label{cor:leading-exponent} Let $L$ be a selfadjoint operator on the  Hilbert
space $\mathcal{H}$ with $L\geq 0$. Let $f,g\in \mathcal{H}$ with
$f,g\in D (L^n)$ for all natural numbers $n$ be given. The
following statements hold:
\begin{itemize}
  \item [(a)]  If $d_L (f,g) <\infty$ holds, we have
$$ \lim_{t\to 0+} \frac{\log|\langle f , e^{-t L} g\rangle |}{\log
t} = d_L (f,g) =   \lim_{t\to 0+} \frac{\log|\langle f , e^{-i t L}
g\rangle |}{\log t}.$$

  \item [(b)] If $d_L (f,g) = \infty$ holds, there exists for any natural
number $n$ a constant  $C_n (f,g) > 0  $ with
$$ |\langle f, e^{-t L} g\rangle |, |\langle f, e^{-i t L} g\rangle | \leq C_n (f,g) \cdot  t^{n+1}$$
for all $t\geq 0$.
\end{itemize}
\end{corollary}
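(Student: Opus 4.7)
The plan is to deduce both parts directly from Lemmas~\ref{lem:stbs} and~\ref{lem:stbu}; no new analytic input is needed, only a careful choice of $n$ in each of those estimates.

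For part (a), I would set $d := d_L(f,g)$ and apply Lemma~\ref{lem:stbs} with $n = d$, which gives
$$\langle f, e^{-tL} g\rangle = (-t)^d\, \frac{\langle f, L^d g\rangle}{d!} + r(t), \qquad |r(t)| \le K\, t^{d+1},$$
for a constant $K = K(f,g,L)$ depending on $\langle f, L^{d+1} f\rangle$ and $\langle g, L^{d+1} g\rangle$. By the very definition of $d_L(f,g)$ the coefficient $c := \langle f, L^d g\rangle / d!$ is nonzero, so for all sufficiently small $t > 0$ I can write $|\langle f, e^{-tL} g\rangle| = t^d |c|\bigl(1 + O(t)\bigr)$. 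Taking logarithms and dividing by $\log t$ then yields
$$\frac{\log|\langle f, e^{-tL} g\rangle|}{\log t} = d + \frac{\log|c| + \log\bigl(1 + O(t)\bigr)}{\log t},$$
and the second summand vanishes as $t \to 0+$ because its numerator stays bounded while $\log t \to -\infty$. The identity for the unitary group follows word for word from Lemma~\ref{lem:stbu}, since $|c|$ and the error structure are of the same form (with $|L|^{d+1}$ in place of $L^{d+1}$).

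For part (b), the hypothesis $d_L(f,g) = \infty$ means $\langle f, L^n g\rangle = 0$ for every $n$, so the Taylor main term on the left-hand side of both Lemmas~\ref{lem:stbs} and~\ref{lem:stbu} disappears. These lemmas then deliver at once
$$|\langle f, e^{-tL} g\rangle| \le t^{n+1}\cdot \frac{\langle f, L^{n+1} f\rangle + \langle g, L^{n+1} g\rangle}{2(n+1)!}$$
together with the analogous bound for $|\langle f, e^{-itL} g\rangle|$ obtained by replacing $L^{n+1}$ with $|L|^{n+1}$. Defining $C_n(f,g)$ to be the maximum of these two coefficients and, say, $1$ (so as to guarantee strict positivity even in degenerate cases where both spectral moments happen to vanish, in which case the left-hand sides vanish as well) gives the claimed inequality.

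The only real subtlety is the bookkeeping in (a), namely verifying that the error term really is $o(1)$ after division by $\log t$. This is immediate once one observes that $\log|c|$ is a finite constant (because $c\ne 0$) and that $\log\bigl(1 + O(t)\bigr) \to 0$, while the denominator $\log t$ diverges to $-\infty$. I do not expect any genuinely hard step; the whole corollary is essentially an unpacking of the two preceding lemmas.
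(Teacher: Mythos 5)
Your proposal is correct and coincides with the paper's intended argument: the paper gives no separate proof of this corollary, treating it as an immediate unpacking of Lemmas~\ref{lem:stbs} and~\ref{lem:stbu}, which is exactly what you do (including the correct observations that $\langle f, L^{d}g\rangle\neq 0$ when $d=d_L(f,g)<\infty$, and that the max with $1$ guarantees $C_n(f,g)>0$ in part (b)).
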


\begin{remark} As mentioned already our results can also be adapted
to real Hilbert space. This can be used to obtain complete analogues
to the results above  for the short term behavior of the semigroup
group $e^{- t L}$ of an arbitrary selfadjoint operator $L$ with
$L\geq 0$ on a real Hilbert space.
\end{remark}

\section{Application to graphs} \label{Application-to-graphs}
In this section we apply the abstract results of the previous
section to graphs. In this situation one can give rather concrete
interpretations of the terms in question.

\bigskip

We consider a discrete set $X$ and  denote the set of all functions
on $X$ by $C(X)$ and the set of all functions with finite support by
$C_c (X)$. For $x\in X$ we denote the characteristic function of $x$
by $1_x$.  We can then think of a    map $m : X\longrightarrow
(0,\infty)$ as a measure (of full support). In this way,  $(X,m)$
becomes a measure space. In particular, there is a natural Hilbert
space $\ell^2 (X,m)$ associated to $(X,m)$.

\smallskip

A \textit{combinatorial graph over $X$} is given by a set $E\subseteq X\times
X$ with
\begin{itemize}
\item  $(x,y) \in E\Longrightarrow (y,x)\in E$,

\item $(x,x)\notin E$ for all $x\in X$,
\end{itemize}
and which we call the \emph{edge set} of the graph.

Whenever $E$ is the edge set of a combinatorial graph over $X$ a sequence
$(x_1,\ldots, x_n)$  in $X$ with $(x_j,x_{j+1})\in E$ for
$j=1,\ldots, n-1$,  is called \textit{path of length $n$ from $x_1$
to $x_n$}. If between any points of $X$ there exists a path, then
the graph $E$  is called \textit{connected}. In this case the
\textit{combinatorial graph distance} $d_E (x,y)$ between $x$ and
$y$ is defined to be the smallest length of a path connecting $x$
and $y$ if $x\neq y$ and to be $0$ if $x = y$.

\smallskip

A selfadjoint operator $L\geq 0$ on $\ell^2 (X,m)$  with $C_c (X)
\subseteq D(L^{1/2})$ is said to be \textit{a Laplacian based on a
graph with edge set $E$} if  for any $x,y\in X$ with $x \neq y$ we
have
$$ \langle 1_x, L 1_y\rangle = \langle
L^{1/2} 1_x, L^{1/2} 1_y \rangle \leq  0$$ with strict inequality if
and only if $(x,y)$ belongs to $E$.  This means that the
off-diagonal matrix elements of $L$ are non-positive. For example
Laplacians arising from Dirichlet forms on graphs (see below)
satisfy this property.

\begin{lemma} Let a countable set $X$ together with a measure  $m : X\longrightarrow
(0,\infty)$ be given and let  $E$ be the edge set of a connected combinatorial graph
over $X$.  Let  $L\geq 0 $ be  a Laplacian based on the graph with $C_c (X) \subset D(L^n)$ for all natural numbers $n$. Then,
$$ d_L (1_x, 1_y) = d_E (x,y)\;\:\mbox{and}\;\:  (-1)^{d_L (1_x, 1_y)} \langle 1_x, L^{d_L (x,y)}  1_y\rangle >0$$
hold for all $x,y\in X$.
\end{lemma}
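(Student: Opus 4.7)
The plan is to expand $\langle 1_x, L^n 1_y\rangle$ combinatorially as a sum over walks of length $n$ in the graph, and then read off both the vanishing and the sign directly from the sign structure of $L$.

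First I would write $L(u,v) := (L\,1_v)(u)$, so that $\langle 1_u, L 1_v\rangle = L(u,v)\,m(u)$. The defining property of a Laplacian based on $E$ then reads $L(u,v) \leq 0$ whenever $u \neq v$, with strict inequality iff $(u,v) \in E$, while the hypothesis $L\geq 0$ together with $1_u \in D(L^{1/2})$ forces
\[
L(u,u) \;=\; \|L^{1/2} 1_u\|^2 / m(u) \;\geq\; 0.
\]
Next, using $1_y \in D(L^n)$, a routine induction yields the walk expansion
\[
\langle 1_x, L^n 1_y\rangle \;=\; m(x) \sum_{x = x_0, x_1, \ldots, x_n = y} L(x_0, x_1)\, L(x_1, x_2) \cdots L(x_{n-1}, x_n),
\]
where the sum runs over all sequences in $X$ beginning at $x$ and ending at $y$.

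For each such sequence let $k$ denote the number of \emph{edge steps}, i.e.\ indices $i$ with $x_{i-1}\neq x_i$; the remaining $n-k$ steps are self-loops. A sequence containing a step that is neither a self-loop nor an edge contributes $0$, while for the remaining sequences the product has sign $(-1)^k$, since each edge step contributes a strictly negative factor and each self-loop a non-negative one. Crucially, the sub-sequence of edge steps of a non-vanishing term is itself a walk of length $k$ in $E$ from $x$ to $y$, which forces $k \geq d_E(x,y)$.

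From this both claims fall out. For $n < d_E(x,y)$ every term vanishes, so $\langle 1_x, L^n 1_y\rangle = 0$ and hence $d_L(1_x,1_y)\geq d_E(x,y)$. For $n = d_E(x,y)$ only walks with $k=n$ can contribute (no self-loops are possible), and these are precisely the shortest paths from $x$ to $y$ in $E$. At least one such path exists by connectedness, every contribution carries the common sign $(-1)^{d_E(x,y)}$, and since all summands have the same sign no cancellation is possible; hence $(-1)^{d_E(x,y)} \langle 1_x, L^{d_E(x,y)} 1_y\rangle > 0$, which simultaneously gives $d_L(1_x,1_y) = d_E(x,y)$ and the claimed strict sign.

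The main obstacle is purely notational bookkeeping: one must account carefully for self-loops when $L(u,u) > 0$ and observe that they cannot shorten a walk. The one place where the signed structure of $L$ is genuinely needed is the \emph{absence of cancellation} among shortest paths, which is guaranteed by the fact that every off-diagonal edge factor carries the same negative sign.
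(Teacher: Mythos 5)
Your proof is correct and takes essentially the same route as the paper: both rest on the expansion of $\langle 1_x, L^n 1_y\rangle$ as a sum over sequences of intermediate vertices and then read off the vanishing and the sign from the fact that the off-diagonal matrix elements are strictly negative exactly on edges. You simply make explicit the self-loop bookkeeping and the no-cancellation argument that the paper compresses into ``from this we easily infer the desired statement.''
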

\begin{proof} For $x,y\in X$ we define $e(x,y) := \langle 1_x, L
1_y\rangle$. Then,  a rather direct induction shows
$$ \langle 1_x, L^n 1_y\rangle = \sum_{x_1\in X} \ldots
\sum_{x_{n-1} \in X} \frac{ e(x,x_1) \ldots e(x_{n-1},
y)}{m(x_1)\ldots m (x_{n-1})}$$ for each $n\geq 2$,  where each of
the consecutive sums is absolutely convergent.  As $L$ is based on
the graph we have $e(x,y)\neq 0$ for $x\neq y$  if and only if
$(x,y)$ belongs to $E$ and in this case $e(x,y) < 0$ holds. From
this we easily infer the desired statement.
\end{proof}

With the previous lemma at hand, we can directly apply the
considerations of the previous section, in particular, Lemma
\ref{lem:stbs} and Lemma \ref{lem:stbu},  to obtain the following
result.

\begin{theorem} Let a countable  set $X$ together with a measure  $m : X\longrightarrow
(0,\infty)$ be given. Let $E$ be the edge set of a connected
combinatorial  graph over $X$ and let the operator  $L\geq 0 $ on
$\ell^2 (X,m)$ be a Laplacian based on the graph  with $C_c
(X)\subset D(L^n)$ for all natural numbers $n$.   Then, for all
$x,y\in X$ we have for all $t\geq 0$ the estimates
$$ \left| \langle 1_x, e^{-t L} 1_y\rangle - t^{d }  \cdot \frac{|\langle 1_x,
L^{d}  1_y\rangle| }{d!}  \right| \leq t^{d  +1} \cdot C(x,y)
$$ and
$$ \left| |\langle 1_x, e^{-i t L} 1_y\rangle| - t^{d } \cdot \frac{|\langle 1_x,
L^{d}  1_y\rangle| }{d!}  \right| \leq t^{d  +1} \cdot  C(x,y)$$
with
$$ d:= d_E (x,y), \;\: C(x,y):= \frac{\langle 1_x, L^{d +1} 1_x\rangle + \langle 1_y, L^{d
+1} 1_y\rangle }{2 \cdot (d+1)!},\:\; \mbox{and}\;\: |\langle 1_x,
L^d 1_y\rangle| >0.$$
\end{theorem}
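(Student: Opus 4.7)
The plan is to specialize the abstract short-time estimates, Lemma \ref{lem:stbs} for the semigroup and Lemma \ref{lem:stbu} for the unitary group, to the vectors $f = 1_x$ and $g = 1_y$. The hypothesis $C_c(X) \subset D(L^n)$ for all $n$ gives exactly the domain condition $1_x, 1_y \in D(L^n)$ required by those lemmas. The preceding lemma of this section supplies the two ingredients that translate the abstract quantity $d_L(1_x, 1_y)$ into the graph distance: the identity $d_L(1_x, 1_y) = d_E(x,y) =: d$ and the sign statement $(-1)^d \langle 1_x, L^d 1_y \rangle > 0$. In particular $|\langle 1_x, L^d 1_y\rangle| > 0$, which is the last assertion of the theorem.

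For the semigroup estimate, I would apply Lemma \ref{lem:stbs} with $n = d$ to obtain
\[
\left| \langle 1_x, e^{-tL} 1_y \rangle - (-t)^d \frac{\langle 1_x, L^d 1_y \rangle}{d!} \right| \leq t^{d+1} \cdot \frac{\langle 1_x, L^{d+1} 1_x \rangle + \langle 1_y, L^{d+1} 1_y \rangle}{2(d+1)!}.
\]
Using the sign condition to rewrite $(-t)^d \langle 1_x, L^d 1_y \rangle / d! = t^d |\langle 1_x, L^d 1_y \rangle| / d!$ for $t \geq 0$, this is precisely the first inequality in the statement with $C(x,y)$ as defined.

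For the unitary case, I apply Lemma \ref{lem:stbu} with $n = d$. Since $L \geq 0$ we have $|L|^{d+1} = L^{d+1}$, so the right-hand side constant again coincides with $C(x,y)$. The resulting bound is
\[
\left| \langle 1_x, e^{-itL} 1_y \rangle - (-it)^d \frac{\langle 1_x, L^d 1_y \rangle}{d!} \right| \leq t^{d+1} C(x,y).
\]
Since $\left| (-it)^d \langle 1_x, L^d 1_y \rangle / d! \right| = t^d |\langle 1_x, L^d 1_y \rangle| / d!$, a single application of the reverse triangle inequality $\bigl| |\alpha| - |\beta| \bigr| \leq |\alpha - \beta|$ yields the second stated inequality. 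No step is a genuine obstacle: the spectral/Taylor machinery was handled in Section \ref{Main-abstract-result}, the graph combinatorics were settled in the preceding lemma, and only this last passage from a complex-valued to an absolute-value bound requires a brief remark.
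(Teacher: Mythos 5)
Your proposal is correct and follows exactly the route the paper intends: the paper gives no written proof beyond stating that the theorem follows directly from Lemma \ref{lem:stbs}, Lemma \ref{lem:stbu} and the preceding lemma, and your specialization to $f=1_x$, $g=1_y$ with $n=d$, the sign argument turning $(-t)^d\langle 1_x, L^d 1_y\rangle$ into $t^d|\langle 1_x, L^d 1_y\rangle|$, and the reverse triangle inequality for the unitary case are precisely the omitted details.
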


From this theorem we directly obtain the following  corollary.

\begin{corollary} Consider the situation of the previous theorem.
Then, for all $x,y\in X$  we have
$$ \lim_{t\to 0+} \frac{\log|\langle 1_x  , e^{-t L} 1_y \rangle |}{\log
t} = d_E (x,y) =   \lim_{t\to 0+} \frac{\log|\langle 1_x , e^{-i t
L} 1_y\rangle |}{\log t}.$$
\end{corollary}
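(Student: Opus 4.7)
The plan is to read off the corollary directly from the two estimates in the previous theorem by taking logarithms and exploiting the fact that the leading coefficient is strictly positive.

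First I would fix $x,y \in X$ and set $d := d_E(x,y)$ and $a(x,y) := |\langle 1_x, L^d 1_y\rangle|/d!$. The theorem guarantees $a(x,y) > 0$, which is the crucial positivity needed to form a logarithm. For the semigroup, the first estimate of the theorem together with the reverse triangle inequality gives
\begin{equation*}
\bigl| |\langle 1_x, e^{-tL} 1_y\rangle| - t^d \, a(x,y) \bigr| \leq t^{d+1} C(x,y),
\end{equation*}
and for the unitary group, the second estimate of the theorem already states this bound with $e^{-itL}$ in place of $e^{-tL}$ and with the absolute value inside on the left. So in both cases I may write, for all sufficiently small $t > 0$,
\begin{equation*}
|\langle 1_x, e^{-tL} 1_y\rangle| = t^d \bigl( a(x,y) + r(t) \bigr), \qquad |r(t)| \leq C(x,y)\, t,
\end{equation*}
and the analogous identity for $e^{-itL}$.

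Next I would take logarithms. For $t$ small enough that $a(x,y) + r(t) \geq a(x,y)/2 > 0$, this yields
\begin{equation*}
\log|\langle 1_x, e^{-tL} 1_y\rangle| = d \log t + \log\bigl(a(x,y) + r(t)\bigr),
\end{equation*}
and the second term remains bounded as $t \to 0^+$, since it converges to $\log a(x,y) \in \R$. Dividing by $\log t$, which tends to $-\infty$, the bounded term contributes $0$ in the limit, and I obtain
\begin{equation*}
\lim_{t\to 0+}\frac{\log|\langle 1_x, e^{-tL} 1_y\rangle|}{\log t} = d = d_E(x,y).
\end{equation*}
The same computation applied to the unitary-group identity gives the second equality.

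There is no real obstacle here; the corollary is essentially a routine consequence of the quantitative theorem, and the only point that requires minor care is to invoke the strict positivity $|\langle 1_x, L^d 1_y\rangle| > 0$ asserted by the theorem in order to justify taking the logarithm of the leading coefficient and to ensure that $a(x,y) + r(t)$ stays bounded away from $0$ for small $t$.
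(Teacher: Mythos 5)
Your proof is correct and follows exactly the route the paper intends: the paper states the corollary as an immediate consequence of the preceding theorem, and your argument (reverse triangle inequality, strict positivity of $|\langle 1_x, L^{d}1_y\rangle|$ to control the logarithm of the leading coefficient, then dividing by $\log t \to -\infty$) is precisely the standard verification the authors leave implicit.
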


\medskip

We finish this section by discussing a specific \textbf{application}
of the previous results in the framework of regular Dirichlet forms
on graphs. For further details of this framework  we refer the
reader to   \cite{KL1}.

\smallskip

A pair $(b,c)$ is called a \textit{weighted graph over $X$} if $ b :
X\times X\longrightarrow [0,\infty)$ satisfies
\begin{itemize}
\item $b(x,y) = b(y,x)$,
\item $b(x,x) =0$,
\item $\sum_{z\in X} b(x,z) < \infty,$
\end{itemize}
for all $x,y\in X$ and  $c : X\longrightarrow [0,\infty)$ is
arbitrary. Each such graph induces  a combinatorial graph with edge set $E_{b,c}$
via
$$E_{b,c}:=\{ (x,y) \in X\times X : b(x,y) >0\}.$$
Moreover, any such graph $(b,c)$ comes with a form $$Q_{b,c} : C_c
(X)\times C_c (X)\longrightarrow \C$$ defined via
$$Q_{b,c} (f,g) =\frac{1}{2}\sum_{x,y} b(x,y) (f(x) - f(y))
\overline{( g(x) - g(y))} + \sum_{x\in X} c(x) f(x)
\overline{g(x)}.$$ This form $Q_{b,c}$ allows for  closed extensions
$Q$  satisfying $Q (f,f)\geq 0$ for all $f$ in the domain of $Q$.
Then, the selfadjoint operator $L$ associated to such a closed
extension  $Q$ will be a Laplacian over $E_{b,c}$ by construction.
This operator $L$ may or may not satisfy the condition that $C_c (X)
\subset D(L^n)$ for all natural numbers $n$. There are, however, two
instances in which this is automatically the case. These are
presented next.

\medskip

\noindent \textbf{Example - graphs with  bounded degree.} If the
degree
$$\Deg : X\longrightarrow [0,\infty), \Deg (x) = \frac{1}{m(x)}
\left(\sum_{z\in X} b(x,z) + c(x)\right)$$ is  bounded, then the
operator $L$ is bounded on $\ell^2 (X,m)$, see e.g. \cite{HKLW}.
Hence, $C_c (X) \subset D (L^n)$ holds for all natural numbers $n$.

\medskip

\noindent \textbf{Example - locally finite graphs.} Here, we follow
\cite{KL1} to which we refer for details and proofs.  If $Q$ is a
closed restriction of the form $Q^{\max}$ with domain $D( Q^{\max})$
given by those $f\in \ell^2 (X,m)$  with $$ \widetilde{Q}
(f,f):=\frac{1}{2}\sum_{x,y} b(x,y) (f(x) - f(y)) \overline{( f(x) -
f(y))} + \sum_{x\in X} c(x) f(x) \overline{f(x)} <\infty$$ and
$$Q^{\max} (f,f) := \widetilde{Q} (f,f)$$
then $L$ is a restriction of the operator $\widetilde{L}$ acting on
$$\widetilde{F} :=\{f\in C(X) : \sum_{y\in X} b(x,y) |f(y)|  <\infty\mbox{ for all }x\in X\}$$
via
$$\widetilde{L} f(x) :=\frac{1}{m(x)} \left(\sum_{y\in X} b(x,y) (f(x) -
f(y)) + c(x) f(x)\right).$$

If  $b$ is locally finite (i.e., for each $x\in X$ the number of
elements of $\{y \in X : b(x,y) >0\}$ is finite) then $C_c (X)$ can
be shown to belong to the domain  of definition of $D(L)$. Moreover,
$C_c (X)$ is clearly invariant under $\widetilde{L}$ and, hence,
$C_c (X)\subset D(L^n)$ holds for all natural numbers $n$.

\smallskip

Thus, in both of these situations the previous results apply and we
have the short time asymptotics  of the heat diffusion governed by
the combinatorial metric.

\section{Remark on an application to
manifolds}\label{Remark-on-application-to-manifolds}  In this
section we shortly remark on a simple application of our main
abstract result on manifolds.

\smallskip

Let $M$ be a Riemannian manifold  with volume $m$. Let $L$ be an
arbitrary selfadjoint operator on $L^2 (X, m)$  with $L\geq 0$ and
the property that it maps $C_c^\infty (X)$ into itself and that the
support of $Lf $ is contained in the support of $f$ for $f\in
C_c^\infty (X)$. Then, from Corollary \ref{cor:leading-exponent} we
obtain for any $f,g\in C_c^\infty (X)$ with disjoint support and any
natural number $n$ the existence of $C_n (f,g) > 0  $ with
$$ |\langle f, e^{-t L} g\rangle |, |\langle f, e^{-i t L} g\rangle | \leq C_n (f,g) \cdot  t^{n+1}$$
for all $t\geq 0$.  This  result does not only  hold for the
semigroup and unitary group of the Laplace-Beltrami operator, where,
of course, much more is known. It also applies to, say, suitable
fourth order operators. Let us emphasize that it is  completely
independent of any additional geometric assumptions.


\end{document}